\documentclass[12pt]{amsart}
\usepackage[english,activeacute]{babel}
\usepackage{amsmath,amsfonts,amssymb,amstext,amsthm,amscd,mathrsfs,amsbsy}
\usepackage[titletoc]{appendix}
\usepackage{lineno} 
\usepackage{epsfig}  
\usepackage{color}  
\usepackage{setspace}
\newtheorem{teo}{Theorem}[section]
\newtheorem{pro}[teo]{Proposition}
\newtheorem{coro}[teo]{Corollary}
\newtheorem{lem}[teo]{Lemma}

\theoremstyle{definition}
\newtheorem{defi}[teo]{Definition}
\newtheorem{exam}[teo]{Example}
\newtheorem{rem}[teo]{Remark}

\newcommand{\K}{\mathbb K}

\newcommand{\N}{\mathbb N}

\newcommand{\Q}{\mathbb Q}

\newcommand{\F}{\mathbb F}

\newcommand{\m}{\mathfrak m}
\newcommand{\Po}{\K[x_1,\ldots,x_N]}
\newcommand{\de}{\mbox{depth}}

\newcommand{\D}{\Delta}
\newcommand{\Dn}{\Delta(n)}

\newcommand{\p}{\mathfrak{p}}


\begin{document}

\title{Cohen-Macaulayness of triangular graphs}

\author{Hernan de Alba, Walter Carballosa, \\
Daniel Duarte, Luis Manuel Rivera}

\maketitle
\begin{abstract}

We study the Cohen-Macaulay property of triangular graphs $T_n$. We show that $T_2$, $T_3$ and $T_5$ are Cohen-Macaulay graphs, and that $T_4$, $T_6$, $T_8$ and $T_n$ are not Cohen-Macaulay graphs, for $n \geq 10$. Finally, we prove that over fields of characteristic zero $T_7$ and $T_9$ are Cohen-Macaulay.

\end{abstract}


\section{Introduction}
Let $R=\Po$ be the polynomial ring over $\K$, where $\K$  is any field. Let $G$ be a simple graph with vertex set $V(G)=\{v_1,\ldots,v_N\}$ and edge set $E(G)$. We identify the vertex $v_i$ with the variable $x_i$. The {\it edge ideal} $I(G)$ of $G$ is the ideal $\langle x_ix_j \colon~\{v_i,v_j\}\in E(G)\rangle$. The graph $G$ is called 
\textit{Cohen-Macaulay} over $\K$ if $R/I(G)$ is a Cohen-Macaulay ring. According to \cite{HHZ}, it is unlikely to have a general classification of Cohen-Macaulay graphs. This situation has led to an extensive study of the Cohen-Macaulay property of particular families of graphs (see, for instance, \cite{EV, Fe, HYZ1, HYZ2, HHZ, VTG, Vi2}).

In this note we study  the Cohen-Macaulayness of triangular graphs. The {\it triangular graph} $T_n$ is the simple graph whose vertices are the $2$-subsets of an $n$-set, $n\geq 2$, and two vertices are adjacent if and only if their intersection is nonempty. 
 It is known that $T_n$ is isomorphic to the Johnson graph $J(n, 2)$, which is in turn the $2$-token graph of the complete graph $K_n$ (see, for instance, \cite{CFLR, FFHH, Jo}). In addition, the complement of $T_n$ is isomorphic to the Kneser graph $K(n, 2)$ and the complement of $T_5$ is isomorphic to the Petersen graph. 

Our main theorem (Theorem~\ref{main}) states that $T_2$, $T_3$ and $T_5$ are Cohen-Macaulay, and that $T_4$, $T_6$, $T_8$ and $T_n$ are not Cohen-Macaulay graphs, for $n \geq 10$.  In addition, it is proved that over fields of characteristic zero $T_7$ and $T_9$ are Cohen-Macaulay.

This note is organized as follows. We start by recalling the basic definitions and results regarding Cohen-Macaulay
graphs that we need. Next, in Section \ref{characterization}, we first prove that $T_n$ 
is unmixed for every $n\in\N$. Later, we give a characterization for the Cohen-Macaulay property of $T_n$ that follows from 
Reisner criterion (Proposition \ref{reduction}). In Section \ref{3 5 7 9}, we first prove that $T_3$ and $T_5$ are Cohen-Macaulay. Next, using a computer algebra system, we compute explicit regular sequences to show that $T_7$ and $T_9$ are Cohen-Macaulay over fields of characteristic zero. Finally, in Section \ref{n even n odd}, we show that $T_4$, $T_6$, $T_8$ and $T_n$ are not Cohen-Macaulay graphs, for $n \geq 10$.

When investigating about the Cohen-Macaulayness of $T_n$, we computed several regular sequences using symmetric
polynomials and we noticed that there was a certain pattern on how these sequences behave as $n$ increases. We
later realized that those patterns also appeared for the edge subring of any simple graph. We conclude this note
with an appendix in which we present an explicit regular sequence of a particularly nice shape for Cohen-Macaulay graphs.
To that end, we first prove the existence of an explicit homogeneous system of parameters using elementary
symmetric polynomials.


\section{Cohen-Macaulay graphs and Cohen-Macaulay simplicial complexes}\label{cm graphs}

Let $R=\Po$ be the polynomial ring over the field $\K$.  Let $\m=\langle x_1,\ldots,x_N \rangle$ and let $I$ be a graded ideal of $R$.
The \textit{depth} of $R/I$ is defined as the largest integer $r$ such that there is a homogeneous sequence $\{h_1, \dots, h_r\} \subset \m$,
such that $h_1$ is not a zero divisor of $R/I$ and $h_i$ is not a zero divisor of $R/\langle I,h_1, \dots, h_{i-1}\rangle$, for every $i\geq2$.

\begin{defi}
We say that $R/I$ is a \textit{Cohen-Macaulay ring} (CM ring for short) if $\de(R/I)=\dim(R/I)$, 
where $\dim(R/I)$ denotes the Krull dimension of $R/I$.
\end{defi}
Let $G$ be a simple graph with vertex set $V(G)=\{v_1,\dots, v_N\}$ and edge set $E(G)$. We identify each vertex $v_i$ with the
variable $x_i$ in $R$. The \textit{edge ideal} $I(G)$ of $G$ is the ideal $\langle x_ix_j\colon~\{v_i,v_j\}\in E(G)\rangle$. The ring $R/I(G)$
is called the \textit{edge subring} of $G$. We say that $G$ is a \textit{Cohen-Macaulay graph over} $\K$ if $R/I(G)$ is CM. We say that $G$ is  a \textit{Cohen-Macaulay graph} if $G$ is CM over any field.

A set $U$ of vertices in a graph $G$ is an \emph{independent set} of vertices if no two vertices in $U$ are adjacent;  a \emph{maximal independent set} is an independent set which is not a proper subset of any independent set in $G$. The \emph{independence number} of $G$ is the number of vertices in a largest independent set in $G$. It is well known that the Krull dimension of $R/I(G)$ is equal to the independence number of $G$ (see \cite{isivi, Vi1}).

Let $\Delta$ be a simplicial complex on the vertex set $V=\{v_1,\ldots,v_N\}$, i.e., $\Delta$ is a family of subsets of $V$ closed under taking subsets and such that $\{v_i\}\in \Delta$, for every $i$. The elements of $\Delta$ are called faces of $\Delta$. The dimension of a face $F \in \Delta$ is $|F|-1$. The dimension of $\D$ is the largest dimension of its faces. As before, we identify $v_i$ with $x_i$. The 
\textit{Stanley-Reisner ideal} $I_{\Delta}$ of $\Delta$ is the ideal generated by all monomials $x_{i_1}\cdots x_{i_r}$ such that $\{v_{i_1},\dots,v_{i_r}\}\notin\Delta$. We say that $\Delta$ is a \textit{Cohen-Macaulay simplicial complex} over $\K$ if $R/I_{\Delta}$ is a CM ring. We say that $\Delta$ is a \textit{Cohen-Macaulay simplicial complex} if $\Delta$ is CM over any field.

\begin{rem} 
Let $G$ be a simple graph.
	\begin{enumerate}
	\item Let $\Delta_G$ be the simplicial complex formed by the independent sets of $G$ (this is a simplicial complex since every subset of an independent set is also independent). Hence $I(G)=I_{\Delta_G}$. Therefore, $G$ is a CM graph if and only if $\Delta_G$ is a CM simplicial complex.
	\item A {\it clique} of a graph $G$ is a subset $S\subseteq V(G)$ such that the graph induced by $S$ is a complete graph. Let $\Delta(G)$ be the simplicial complex formed by all cliques of $G$ and let $\overline{G}$ be the complement graph of $G$. Notice that $\Delta(\overline{G})=\Delta_G$: every clique of $\overline{G}$ is an independent set of $G$ and vice versa.
	\end{enumerate}
\end{rem}

\begin{defi}
Let $\Delta$ be a simplicial complex and $F\in\Delta$. The {\it link}  of $F$ in $\D$, denoted $lk_{\D}(F)$, is the simplicial complex  
$\{H\in\Delta \colon~H\cap F=\emptyset\mbox{ and } H\cup F\in\Delta\}.$
We will often denote the link of $F$ in $\Delta$ just as $lk(F)$ if there is no risk of confusion.
\end{defi}
The CM property of a graph can be determined by the following homological criterion (see \cite{reis}). 
\begin{teo}(\textit{Reisner's criterion})\label{Reisner criterion}
Let $\Delta$ be a simplicial complex. The following conditions are equivalent:
\begin{itemize}
\item[(a)]$\Delta$ is Cohen-Macaulay over $\K$.
\item[(b)]$\widetilde{H_i}(lk(F);\K)=0$, with $F\in\Delta$ and $i<\dim lk(F)$.
\end{itemize}
\end{teo}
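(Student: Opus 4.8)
The plan is to deduce the equivalence from the theory of local cohomology, via Hochster's formula; this is the standard route (see Bruns--Herzog or Stanley). Write $A=R/I_{\Delta}$ for the Stanley--Reisner ring and set $d=\dim A=\dim\Delta+1$. The first step is a purely homological reduction. By Grothendieck's vanishing and non-vanishing theorems, for the finitely generated graded $R$-module $A$ one has $\de(A)=\min\{i:H^i_{\m}(A)\neq0\}$ and $d=\max\{i:H^i_{\m}(A)\neq0\}$, where $H^i_{\m}(-)$ denotes local cohomology supported at $\m$. Consequently $A$ is Cohen--Macaulay over $\K$, i.e. $\de(A)=d$, if and only if $H^i_{\m}(A)=0$ for every $i<d$. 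The problem is thereby reduced to translating this vanishing into the link condition (b).

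The heart of the argument, and the step I expect to be the main obstacle, is Hochster's formula for the fine-graded pieces of local cohomology. Since $A$ is $\Z^N$-graded, each module $H^i_{\m}(A)$ decomposes as a direct sum over multidegrees $\mathbf a\in\Z^N$. I would compute these pieces from the \v{C}ech complex on $x_1,\dots,x_N$ and then identify, one multidegree at a time, the relevant strand of that complex with an augmented and degree-shifted simplicial cochain complex of a link. The statement to be established is that $H^i_{\m}(A)_{\mathbf a}=0$ unless $\mathbf a\le\mathbf 0$, and that for such $\mathbf a$, with negative support $F=\{j:a_j<0\}$, one has
\[
H^i_{\m}(A)_{\mathbf a}\;\cong\;\widetilde{H}^{\,i-|F|-1}\!\bigl(lk_{\Delta}(F);\K\bigr),
\]
the right-hand side being $0$ whenever $F\notin\Delta$. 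Carrying out this identification --- pinning down the shift, checking that the \v{C}ech differential matches the simplicial coboundary up to sign, and handling the degenerate terms in the lowest degrees --- is the genuinely technical core of the proof; everything else is formal.

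Granting Hochster's formula, the conclusion is index bookkeeping. The vanishing $H^i_{\m}(A)=0$ for all $i<d$ is equivalent to $\widetilde{H}^{\,i-|F|-1}(lk_{\Delta}(F);\K)=0$ for every $F\in\Delta$ and every $i<d$. Writing $j=i-|F|-1$ and using $d=\dim\Delta+1$ together with the identity $\dim lk_{\Delta}(F)=\dim\Delta-|F|$, the constraint $i<d$ becomes exactly $j<\dim lk_{\Delta}(F)$. Over a field reduced homology and reduced cohomology have the same dimension, so this matches condition (b); the case $F=\emptyset$ recovers $\widetilde{H}_i(\Delta;\K)=0$ for $i<\dim\Delta$, and the low-degree conventions for $\widetilde{H}_{-1}$ of the irrelevant and void complexes must be respected so that the boundary cases are counted correctly.

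The one point of care in the last step is the identity $\dim lk_{\Delta}(F)=\dim\Delta-|F|$, which requires $\Delta$ to be pure. In the direction (a)$\Rightarrow$(b) this is free, since a Cohen--Macaulay complex is automatically pure. For (b)$\Rightarrow$(a) one first shows that condition (b) forces purity: condition (b) is inherited by every link, since $lk_{lk_{\Delta}(F)}(F')=lk_{\Delta}(F\cup F')$, so an induction on dimension reduces a hypothetical pair of facets of distinct dimensions to a lower-dimensional link whose reduced homology would violate (b). With purity in hand the identity holds for all $F$ and the two directions close up.
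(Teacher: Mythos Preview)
The paper does not prove Reisner's criterion at all: Theorem~\ref{Reisner criterion} is quoted as a black box with a citation to Reisner's original paper, so there is no in-paper argument to compare your proposal against.

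That said, your outline is correct and is exactly the modern textbook route (Bruns--Herzog, Stanley): reduce Cohen--Macaulayness to local-cohomology vanishing via Grothendieck, then invoke Hochster's formula
\[
H^i_{\m}(R/I_{\Delta})_{\mathbf a}\;\cong\;\widetilde{H}^{\,i-|F|-1}\bigl(lk_{\Delta}(F);\K\bigr),\qquad F=\{j:a_j<0\},
\]
and reindex. Your treatment of the one genuine subtlety --- that the translation $i<d \Longleftrightarrow j<\dim lk_{\Delta}(F)$ uses $\dim lk_{\Delta}(F)=\dim\Delta-|F|$, which only holds once purity is known --- is the right diagnosis. The purity argument you sketch for (b)$\Rightarrow$(a) is a bit terse; the clean way to run it is: condition (b) with $F=\emptyset$ forces $\widetilde{H}_0(\Delta;\K)=0$ when $\dim\Delta\ge1$, so $\Delta$ is connected, and since (b) is inherited by links, induction on dimension gives that every $lk_{\Delta}(v)$ is pure of dimension $\dim\Delta-1$; connectedness then propagates this to all facets of $\Delta$. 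With that filled in, your proof is complete.

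It may be worth noting that Reisner's original 1976 argument did not proceed through Hochster's formula in this packaged form; the local-cohomology route you give is the streamlined version that became standard afterwards.
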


\begin{coro}\label{Reisner criterion dim 1}
If $\Delta$ is a 1-dimensional simplicial complex, then $\Delta$ is CM if and only if $\Delta$ is connected.
\end{coro}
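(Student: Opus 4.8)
The plan is to apply Reisner's criterion (Theorem~\ref{Reisner criterion}) directly, analyzing the links of all faces of $\Delta$. Since $\Delta$ is $1$-dimensional, its faces are the empty set $\emptyset$ (of dimension $-1$), the vertices $\{v_i\}$ (of dimension $0$), and the edges $\{v_i,v_j\}$ (of dimension $1$); note that $\Delta$ has at least one edge and is therefore nonempty. I would compute $lk(F)$ in each of these three cases and determine which faces impose a genuine homological condition.

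First, for $F=\emptyset$ the definition of link gives $lk(\emptyset)=\Delta$, which has dimension $1$. Hence Reisner's criterion requires $\widetilde{H_i}(\Delta;\K)=0$ for $i<1$. The only nontrivial such condition is $\widetilde{H_0}(\Delta;\K)=0$, since $\widetilde{H}_{-1}$ of any nonempty complex vanishes; and for a simplicial complex $\widetilde{H_0}(\Delta;\K)=0$ holds precisely when $\Delta$ is connected. This is the condition that will turn out to characterize the Cohen-Macaulay property.

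Next I would check that the remaining faces impose no further constraints. For a vertex $F=\{v_i\}$, since $\dim\Delta=1$ the only $H$ with $H\cap\{v_i\}=\emptyset$ and $H\cup\{v_i\}\in\Delta$ are $\emptyset$ and the vertices $\{v_j\}$ adjacent to $v_i$; thus $lk(\{v_i\})$ is a $0$-dimensional complex (or the complex $\{\emptyset\}$ of dimension $-1$ if $v_i$ is isolated), so Reisner's condition, which only concerns $i<\dim lk(\{v_i\})\le 0$, is either vacuous or reduces to the automatic vanishing of $\widetilde{H}_{-1}$. Similarly, for an edge $F=\{v_i,v_j\}$, any nonempty $H$ disjoint from $F$ would make $H\cup F$ a face of dimension at least $2$, which is impossible; hence $lk(\{v_i,v_j\})=\{\emptyset\}$ has dimension $-1$ and imposes no condition at all.

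Putting these computations together, Reisner's criterion reduces to the single requirement $\widetilde{H_0}(\Delta;\K)=0$, that is, that $\Delta$ be connected, which yields the claimed equivalence. The step requiring the most care is the bookkeeping of reduced homology in degrees $-1$ and $0$ together with the exact dimensions of the links, so as to correctly isolate $F=\emptyset$ as the unique face whose link produces a genuine constraint while verifying that the vertices and edges contribute nothing.
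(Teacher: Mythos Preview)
Your argument is correct and follows exactly the route the paper intends: the corollary is stated immediately after Reisner's criterion with no proof given, so the paper is implicitly leaving the face-by-face verification you carried out to the reader. Your bookkeeping of the links and of $\widetilde{H}_{-1}$ versus $\widetilde{H}_0$ is accurate, and you correctly isolate $F=\emptyset$ as the only face contributing a nontrivial constraint.
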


We will also need a result relating the CM property of a simplicial complex to some property of the $h-$vector
of the simplicial complex.

\begin{defi}
Let $\Delta$ be a simplicial complex of dimension $d$.
\begin{itemize}
\item[{i.}] The $f-$vector of $\D$ is defined as $f(\Delta)=(f_{-1},f_0,\dots,f_d)$,
where $f_{-1}=1$ and $f_i$ denotes the number of faces of dimension $i$ of $\D$, for $i\geq0$.
\item[{ii.}] The $h-$vector of $\D$ is defined as $h(\Delta)=(h_0, \dots, h_{d+1})$, where
\[
h_k=\sum_{i=0}^{k}(-1)^{k-i}\binom{d+1-i}{k-i}f_{i-1},
\]
and $0\leq k \leq d+1$.
\end{itemize}
\end{defi}

\begin{teo}\cite[Chapter II, Corollary 3.2]{stanley}\label{h vector}
Let $\D$ be a simplicial complex of dimension $d$. If $\D$ is CM, then
$h_i(\D)\geq0$, for $0\leq i \leq d+1$.
\end{teo}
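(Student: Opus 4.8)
The plan is to translate the combinatorial nonnegativity of the $h$-vector into a statement about the Hilbert function of an Artinian quotient of $R/I_{\Delta}$, where nonnegativity is automatic because a Hilbert function records dimensions of $\K$-vector spaces.

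First I would express the Hilbert series of $R/I_{\Delta}$ through the $f$-vector. The standard monomial $\K$-basis of $R/I_{\Delta}$ consists of the monomials whose support is a face of $\Delta$, so a face of dimension $i$ (having $i+1$ vertices) contributes $t^{i+1}/(1-t)^{i+1}$, and the empty face contributes $f_{-1}=1$. Hence
\[
H(R/I_{\Delta},t)=\sum_{i=-1}^{d}f_i\,\frac{t^{i+1}}{(1-t)^{i+1}}.
\]
Placing everything over the common denominator $(1-t)^{d+1}$ and collecting powers of $t$, one checks that this equals $\bigl(\sum_{k=0}^{d+1}h_k t^k\bigr)/(1-t)^{d+1}$; the binomial coefficients appearing in the definition of $h_k$ are exactly those produced by expanding $(1-t)^{d-i}$ in this rearrangement. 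This step is a purely formal manipulation of power series and establishes that the $h_k$ are the coefficients of the numerator in the reduced Hilbert series.

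Since the $h_k$ depend only on the $f$-vector, they are insensitive to the field, and because the Cohen-Macaulay property is preserved under the flat base change $\K\hookrightarrow\K'$, I may assume $\K$ is infinite. Then, as $\dim(R/I_{\Delta})=d+1$ (the Krull dimension of a Stanley-Reisner ring is one more than the dimension of the complex), a prime-avoidance argument produces a linear system of parameters $\theta_1,\dots,\theta_{d+1}\in R_1$ for $R/I_{\Delta}$. The Cohen-Macaulay hypothesis, $\de(R/I_{\Delta})=\dim(R/I_{\Delta})=d+1$, guarantees that this system of parameters is in fact a regular sequence. Quotienting by a degree-one nonzerodivisor multiplies the Hilbert series by $(1-t)$, so the Artinian reduction $A=(R/I_{\Delta})/(\theta_1,\dots,\theta_{d+1})$ has Hilbert series
\[
H(A,t)=(1-t)^{d+1}H(R/I_{\Delta},t)=\sum_{k=0}^{d+1}h_k t^k,
\]
whence $h_k=\dim_{\K}A_k\geq 0$ for every $k$, as desired.

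The main obstacle is producing the linear system of parameters and recognizing it as a regular sequence: the former requires an infinite field (handled by base change) together with a prime-avoidance argument in $R_1$ against the finitely many minimal primes of $I_{\Delta}$, while the latter is precisely where the Cohen-Macaulay hypothesis is used, via the standard fact that in a Cohen-Macaulay ring every system of parameters is a regular sequence. Once these are in hand, the nonnegativity follows immediately from the vector-space interpretation of the $h_k$.
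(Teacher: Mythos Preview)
Your argument is correct and is essentially the classical proof: compute the Hilbert series of $R/I_{\Delta}$ from the $f$-vector, identify the numerator with the $h$-polynomial, pass to an infinite field, choose a linear system of parameters, use Cohen--Macaulayness to ensure it is a regular sequence, and read off $h_k=\dim_{\K}A_k\geq 0$ from the Artinian reduction.

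Note, however, that the paper does not supply its own proof of this statement. Theorem~\ref{h vector} is quoted from Stanley's book \cite[Chapter~II, Corollary~3.2]{stanley} and is used as a black box (in the proof of Proposition~\ref{non CM n>11}). So there is no proof in the paper to compare against; what you have written is precisely the standard argument that appears in the cited reference.
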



\section{A characterizacion for the CM property of $T_n$}\label{characterization}

The {\it triangular graph} $T_n$ is the simple graph having as vertices the $2$-subsets of a $n$-set, $n\geq 2$, and two vertices are adjacent 
if and only if their intersection is nonempty. The triangular graph $T_4$ is shown in Figure~\ref{johnson42}. 
\begin{figure}[h]
\hfill
\begin{minipage}[t]{.45\textwidth}
\begin{center}
\epsfig{file=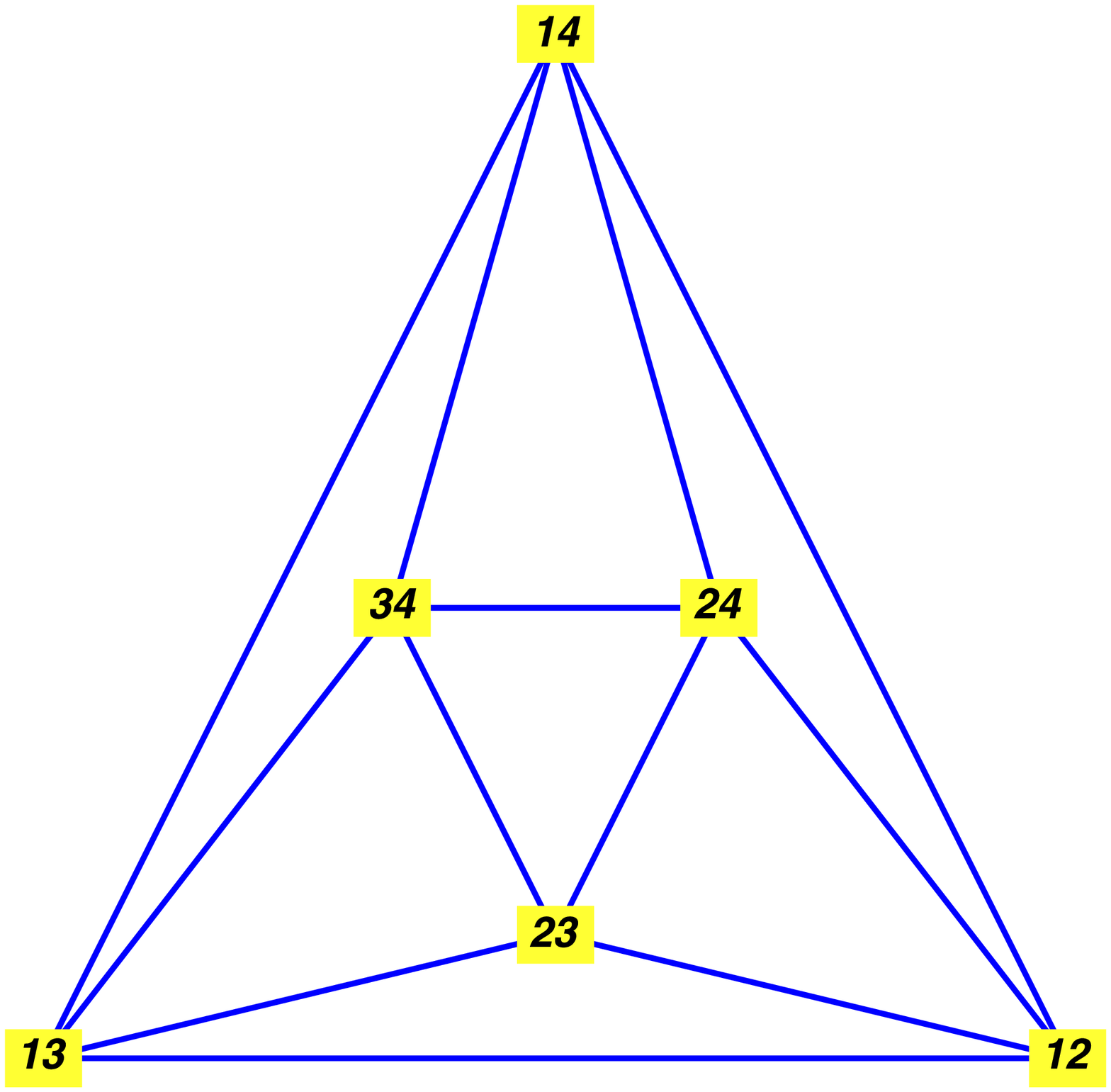, scale=0.3} 
\caption{$T_4$.}
\label{johnson42}
\end{center}
\end{minipage}
\hfill
\begin{minipage}[t]{.45\textwidth}
\begin{center}
\epsfig{file=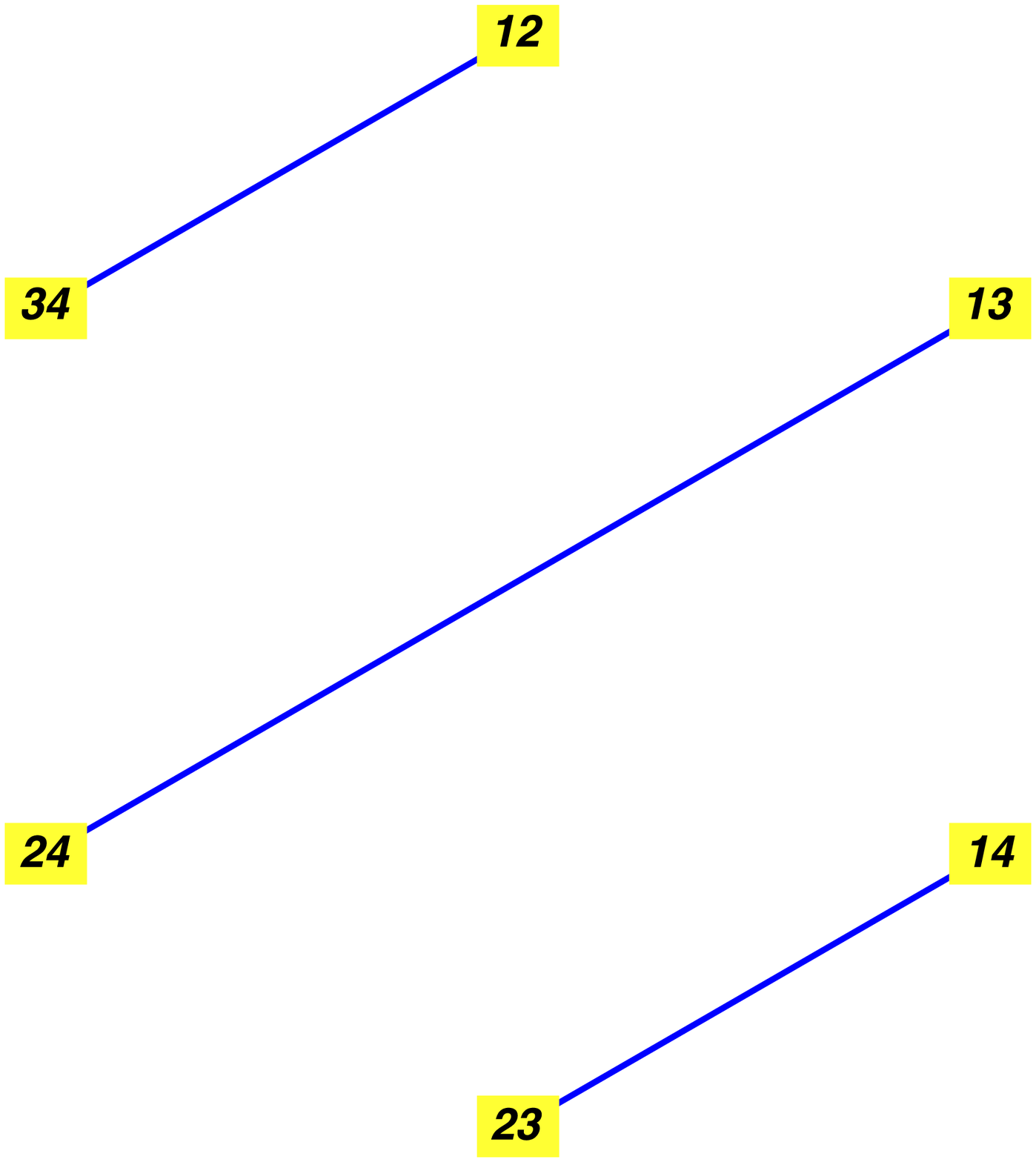, scale=0.3} 
\caption{$\overline{T_4}$.}
\label{c-johnson42}
\end{center}
\end{minipage}
\hfill
\end{figure}

We denote by $(ij)$ the vertices of $T_n$, where $1\leq i<j\leq n$, and by 
$\Dn$ the simplicial complex of independent sets of $T_n$. If $n<2$ we define $\Dn=\emptyset$.

A graph $G$ is {\it unmixed} if any two maximal independent sets of $G$ have the same cardinality. 
Since every CM graph is unmixed, the following proposition is relevant.

\begin{pro}\label{Johnson unmixed}
Every triangular graph $T_n$ is unmixed.
\end{pro}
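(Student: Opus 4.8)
The plan is to reinterpret independence in $T_n$ as a matching condition in a complete graph, and then exploit completeness directly. First I would unwind the definitions: by construction, two vertices $(ij)$ and $(kl)$ of $T_n$ are non-adjacent precisely when the $2$-subsets $\{i,j\}$ and $\{k,l\}$ are disjoint. Hence a set $U$ of vertices of $T_n$ is independent if and only if the associated $2$-subsets are pairwise disjoint, i.e. they form a matching of the complete graph $K_n$ on the underlying $n$-set. This bijection between independent sets of $T_n$ and matchings of $K_n$ restricts to a bijection between maximal independent sets of $T_n$ and maximal matchings of $K_n$, so it suffices to show that all maximal matchings of $K_n$ have the same cardinality.

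The key step is a simple counting observation valid in a complete graph. A matching $M$ of $K_n$ covers $2|M|$ of the $n$ elements and leaves $n-2|M|$ of them uncovered. Since any two distinct elements of the $n$-set span an edge of $K_n$, if two or more elements were uncovered they would constitute a $2$-subset disjoint from every edge of $M$; adjoining it to $M$ would produce a strictly larger matching. Therefore $M$ is maximal if and only if at most one element remains uncovered, that is, $n-2|M|\in\{0,1\}$.

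It follows at once that every maximal matching of $K_n$ has cardinality $\lfloor n/2\rfloor$ — a perfect matching when $n$ is even and a near-perfect matching when $n$ is odd. Transporting this back through the bijection, every maximal independent set of $T_n$ has exactly $\lfloor n/2\rfloor$ vertices, which is precisely the statement that $T_n$ is unmixed; in particular the independence number, and hence $\dim R/I(T_n)$, equals $\lfloor n/2\rfloor$. The degenerate case $n<2$ is vacuous, since then $\Dn=\emptyset$.

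I do not expect a genuine obstacle here. The only point that needs care is the bookkeeping between the two adjacency conventions — intersecting $2$-subsets are \emph{adjacent} in $T_n$, so \emph{disjoint} ones give independent sets — after which the completeness of $K_n$ does all the work. The argument is purely combinatorial and, as one would expect for an unmixedness statement, is entirely independent of the field $\K$.
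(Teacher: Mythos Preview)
Your argument is correct and is essentially the same as the paper's: both hinge on the observation that an independent set $A$ with $|A|<\lfloor n/2\rfloor$ leaves at least two elements of the underlying $n$-set uncovered, so one can adjoin the corresponding $2$-subset to enlarge $A$. The only cosmetic difference is that you phrase this via matchings of $K_n$, whereas the paper works directly with independent sets of $T_n$.
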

\begin{proof}
It is well known that the independence number of $T_n$ is $\lfloor{n/2}\rfloor$. We prove, by contradiction,  that any maximal independent set in $T_n$ 
has $\lfloor{n/2}\rfloor$ vertices. Let $A$ be any maximal independent set of $T_n$ and suppose that $|A| < \lfloor{n/2}\rfloor$. Notice that there are $n-2|A|$ elements in $\{1,\dots,n\} \setminus \cup A$, with $n-2|A| >1$. Therefore, we can take a 2-set, say $z$, from $\{1,\dots,n\} \setminus \cup A$ to construct the independent set $A'=A\cup\{z\}$, which is a contradiction. 
\end{proof}

We need the following lemma to give a characterization for the Cohen-Macaulay property of 
the triangular graph $T_n$. 

\begin{lem}\label{isom face}
Let $F\in\Dn$ be any face such that $|F|=m$, where $n \geq 2$ and $m\geq0$. Then we have the following identification of simplicial complexes:
\[
lk_{\Dn}(F)\cong \D(n-2m).
\]
\end{lem}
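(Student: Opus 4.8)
The plan is to translate everything into the language of matchings. Two vertices $(ij)$ and $(kl)$ of $T_n$ are adjacent precisely when $\{i,j\}\cap\{k,l\}\neq\emptyset$, so a set of vertices of $T_n$ is independent if and only if the corresponding $2$-subsets are pairwise disjoint. Thus the faces of $\Dn$ are exactly the matchings of the complete graph on $\{1,\dots,n\}$, and under this dictionary a face $F$ with $|F|=m$ is a matching of size $m$, which covers exactly $2m$ of the $n$ points. I would set $S=\{1,\dots,n\}\setminus\bigcup F$, so that $|S|=n-2m$, and aim to show that $lk_{\Dn}(F)$ is precisely the matching complex on $S$; relabelling the elements of $S$ by $\{1,\dots,n-2m\}$ then identifies this complex with $\D(n-2m)$.

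The core of the argument is to prove that $H\in lk_{\Dn}(F)$ if and only if $H$ is a matching supported on $S$. First I would combine the two link conditions: $H\cup F\in\Dn$ says the $2$-subsets in $H\cup F$ are pairwise disjoint, while $H\cap F=\emptyset$ says that $H$ and $F$ share no $2$-subset; together these force every $2$-subset in $H$ to be vertex-disjoint from every $2$-subset in $F$. Next I would observe that a $2$-subset $e\in H$ is disjoint from all edges of $F$ exactly when $e\subseteq S$: if $e$ contained a covered point $v\in f$ for some $f\in F$, then $e\cap f\ni v$, contradicting disjointness, and conversely any $e\subseteq S$ automatically avoids every edge of $F$. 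Hence the elements of any $H$ in the link are pairwise disjoint $2$-subsets of $S$, i.e. $H$ is a matching on $S$; conversely, any matching on $S$ uses only uncovered points and so satisfies both link conditions. This yields a bijection between the faces of $lk_{\Dn}(F)$ and the faces of the matching complex on $S$, and since the correspondence is simply "restrict attention to $S$" it visibly preserves inclusion, hence is an isomorphism of simplicial complexes.

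I expect the substance to lie entirely in the disjointness observation above, with the rest being bookkeeping. The main point to be careful about is that $H\cap F=\emptyset$ is genuinely needed and is not already implied by $H\cup F\in\Dn$, since a repeated edge would keep $H\cup F$ a matching while violating vertex-disjointness of $H$ from $F$; once the whole ground set splits as $S\sqcup\bigcup F$, the two conditions collapse cleanly into "$H$ lives on $S$." The only remaining cases to check are the boundary ones where $F$ is a maximum matching, so that $n-2m\in\{0,1\}$ and $S$ contains no $2$-subset; there $lk_{\Dn}(F)$ reduces to the empty face alone, and the identification with $\D(n-2m)$ holds by the stated convention for small $n$.
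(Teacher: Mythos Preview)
Your proposal is correct and follows essentially the same approach as the paper: both define the set of uncovered points (your $S$, the paper's $A$) and observe that the link consists precisely of the independent sets (matchings) supported there, which identifies it with $\D(n-2m)$. Your argument is somewhat more detailed---you spell out the disjointness verification and the role of the condition $H\cap F=\emptyset$ that the paper leaves implicit---but the idea is the same.
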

\begin{proof}
If $F=\emptyset$ the statement holds by definition of $lk_{\Dn}(F)$.  If $m=  \lfloor{n/2}\rfloor$, then $n=2m$ or $n=2m+1$ which implies that $lk_{\Dn}(F)=\emptyset=\D(n-2m)$ in both cases. Suppose $1 \leq m < \lfloor{n/2}\rfloor$. Assume 
$F=\{(i_1j_1),\dots,(i_mj_m)\}$. Let $A=\{1,\dots,n\}\setminus\{i_1,\dots,i_m,j_1,\dots,j_m\}$.
Since $F$ is an independent set of $T_n$ we have that $|A|=n-2m \geq 2$. 
Notice that $lk_{\Dn}(F)$ consists of every independent set of $T_n$ formed by elements $(ij)$ such that 
$i, j\in A$, and $i\neq j$. Now observe that the set of independent sets formed with couples $(ij)$ with $i,j\in A$, $i\neq j$ can be 
identified with $\D(n-2m)$.
\end{proof}

\begin{pro}\label{reduction}
Let $n \geq 2$. Assume $n$ is odd (resp. even). The simplicial complex $\Dn$ is CM if and only if $\widetilde{H_i}(\D(l); \K)=0$ for every $l\leq n$, with $l$ odd (resp. even), and for every $i<\dim(\D(l))$.
\end{pro}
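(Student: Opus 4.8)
The plan is to apply Reisner's criterion (Theorem~\ref{Reisner criterion}) and then use Lemma~\ref{isom face} to collapse the family of homological conditions over all links of $\Dn$ into the stated list indexed by the numbers $l$ of a fixed parity. By Reisner's criterion, $\Dn$ is CM over $\K$ if and only if $\widetilde{H_i}(lk_{\Dn}(F);\K)=0$ for every face $F\in\Dn$ and every $i<\dim lk_{\Dn}(F)$. The crucial observation is that, by Lemma~\ref{isom face}, one has $lk_{\Dn}(F)\cong\D(n-2m)$ with $m=|F|$, and this isomorphism type depends only on the cardinality $m$, not on the particular face $F$. Hence the vanishing condition attached to $F$ is in reality a condition on $\D(n-2m)$ alone, so all faces of a given size impose one and the same requirement.

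Next I would determine the range of $m=|F|$ as $F$ runs over the faces of $\Dn$. Since every subset of a maximal independent set of $T_n$ is again independent, and such a set has $\lfloor n/2\rfloor$ vertices by Proposition~\ref{Johnson unmixed}, the cardinality $m$ realizes every value in $\{0,1,\dots,\lfloor n/2\rfloor\}$. Writing $l=n-2m$, as $m$ sweeps this range the integer $l$ runs exactly over the nonnegative values $l\leq n$ having the same parity as $n$: the even values $n,n-2,\dots,0$ when $n$ is even, and the odd values $n,n-2,\dots,1$ when $n$ is odd. Combining this with the previous paragraph, the total collection of Reisner conditions for $\Dn$ is equivalent to demanding $\widetilde{H_i}(\D(l);\K)=0$ for every $i<\dim\D(l)$ and every such $l$, which is precisely the asserted characterization; the parity restriction in the conclusion ($l$ odd versus $l$ even) is forced by $l=n-2m$.

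The points I expect to require care are bookkeeping rather than conceptual. First, one must confirm that faces of each cardinality $m\leq\lfloor n/2\rfloor$ genuinely occur, so that every admissible value of $l$ is actually attained; this is immediate from the remark that subsets of a maximal independent set are faces. Second, one must dispose of the extreme case $m=\lfloor n/2\rfloor$, where $\D(n-2m)$ equals $\D(0)$ or $\D(1)$; here the link is empty (as recorded in Lemma~\ref{isom face}), its dimension is $-1$, and the condition $i<\dim lk(F)$ is vacuous, so this value of $l$ may be included in the statement without affecting anything. I therefore anticipate the main obstacle to be nothing more than making the parity-and-range correspondence between the index set $\{0,\dots,\lfloor n/2\rfloor\}$ for $m$ and the set of admissible $l$ fully explicit, and matching it cleanly with the degenerate boundary case.
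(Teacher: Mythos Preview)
Your proposal is correct and follows essentially the same argument as the paper: apply Reisner's criterion, invoke Lemma~\ref{isom face} to identify each link $lk_{\Dn}(F)$ with $\D(n-2|F|)$, and then observe that as $|F|$ ranges over $\{0,\dots,\lfloor n/2\rfloor\}$ the values $l=n-2|F|$ sweep exactly the integers $l\leq n$ of the same parity as $n$. If anything, your write-up is more careful than the paper's, since you explicitly verify that every cardinality $m$ is realized by some face and that the extreme case $m=\lfloor n/2\rfloor$ (empty link) contributes only a vacuous condition.
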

\begin{proof}
Assume that $n$ is odd. Suppose that $\Dn$ is CM.  Choose any odd number $l$ such that $3\leq l\leq n$. By Lemma \ref{isom face}, 
$\D(l)\cong lk_{\Dn}(F)$ for any face $F\in\Dn$ such that $|F|=(n-l)/2$. Thus, $\widetilde{H_i}(\D(l); \K)=\widetilde{H_i}(lk_{\Dn}(F); \K)=0$, 
for every $i<\dim lk_{\Dn}(F) =\dim(\D(l))$, according to Reisner's criterion (Theorem \ref{Reisner criterion}).

To prove the other implication, let $F\in\Dn$ be such that $|F|=m$, with $m\geq0$. By Lemma \ref{isom face}, $lk_{\Dn}(F)\cong \D(n-2m)$.
Since $n$ is odd, $n-2m$ is also odd and $n-2m\leq n$. By the hypothesis, $\widetilde{H_i}(lk_{\Dn}(F); \K)=\widetilde{H_i}(\D(n-2m); \K)=0$, for every $i<\dim(\D(n-2m))=\dim lk_{\Dn}(F)$. By Reisner's criterion, $ \Dn$ is CM. The proof is completely analogous for $n$ even.
\end{proof}

\begin{coro}\label{lema walter}
Suppose that there exists an odd (resp. even) integer $n_0$ such that $T_{n_0}$ is not CM. Then $T_n$ is not CM for every odd (resp. even) $n\geq n_0$.
\end{coro}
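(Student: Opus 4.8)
The plan is to read off the statement as an immediate consequence of the characterization in Proposition~\ref{reduction}, exploiting the fact that the family of homological conditions appearing there grows monotonically with $n$. The only point to make precise is that enlarging $n$ within a fixed parity class can only add constraints of the form $\widetilde{H_i}(\D(l);\K)=0$; it never relaxes an existing one. So once such a condition fails for some $T_{n_0}$, it must keep failing for every larger $n$ of the same parity.

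First I would dispose of the translation between the graph and its independence complex: since $\Dn=\Delta_{T_n}$ by construction, $T_n$ is CM over $\K$ exactly when $\Dn$ is CM over $\K$, so it suffices to argue about the complexes $\Dn$. I would treat the odd case, the even case being verbatim. Because $T_{n_0}$ is not CM, the complex $\D(n_0)$ is not CM, so by the contrapositive of Proposition~\ref{reduction} there is an odd integer $l_0$ with $l_0\leq n_0$ together with an index $i<\dim\bigl(\D(l_0)\bigr)$ for which $\widetilde{H_i}(\D(l_0);\K)\neq0$. I would record this failing pair $(l_0,i)$ explicitly, since it is the witness I intend to transport to larger $n$.

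Now let $n\geq n_0$ be any odd integer. Since $l_0$ is odd and $l_0\leq n_0\leq n$, the very same nonvanishing class $\widetilde{H_i}(\D(l_0);\K)\neq0$ with $i<\dim(\D(l_0))$ already shows that the condition required by Proposition~\ref{reduction} for $\Dn$ fails. Hence $\Dn$ is not CM, and therefore $T_n$ is not CM, which is the desired conclusion; the even case follows by the same argument applied to even $l$ and even $n$. Since the whole argument is just the inclusion of index sets $\{\,l\ \text{odd}:l\leq n_0\,\}\subseteq\{\,l\ \text{odd}:l\leq n\,\}$ fed through the equivalence of Proposition~\ref{reduction}, there is no genuine obstacle here. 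The one thing worth stating carefully is that the reduced homology is taken over the same field $\K$ throughout, so the implication is to be read over a fixed field.
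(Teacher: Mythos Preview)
Your argument is correct and is exactly the immediate deduction from Proposition~\ref{reduction} that the paper intends: the corollary is stated there without proof precisely because the monotonicity of the index set $\{\,l\ \text{of fixed parity}:l\leq n\,\}$ makes it evident. Your care in fixing the field $\K$ throughout is appropriate given the paper's convention that ``CM'' means CM over every field.
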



\section{The Cohen-Macaulay property of  $T_3, T_5, T_7$ and $T_9$}\label{3 5 7 9}

\begin{pro}\label{3,5 are CM}
$T_3$ and $T_5$ are CM graphs.
\end{pro}
\begin{proof}
Since $T_3$ is a complete graph, by Example \ref{complete graph is CM} below, $T_3$ is CM.
Now consider the following path in $\D(5)$:
\[
(12),(34),(25),(14),(35),(24),(13),(45),(23),(15).
\]
This path passes through all vertices in $\D(5)$, hence it is connected. Since the independence number of
$T_5$ is $2$, the simplicial complex $\D(5)$ is 1-dimensional. By Corollary \ref{Reisner criterion dim 1}
we conclude that $\D(5)$ is CM, that is, $T_5$ is CM.
\end{proof}

To verify that $T_7$ and $T_9$ are CM we used the computer algebra system $\mathtt{SINGULAR}$ 
$\mathtt{4}$-$\mathtt{0}$-$\mathtt{2}$ \cite{DGPS}. One minor difficulty here is to effectively compute the edge
ideal of $T_n$. To that end we use the following remark.

\begin{rem}
The graph $T_n$ can be obtained from $T_{n-1}$ and the complete graph $K_{n-1}$ on the vertices 
$(1\mbox{ }n),(2\mbox{ }n),\dots,(n-1\mbox{ }n)$ by joining the vertex $(i\mbox{ }j)\in V(T_{n-1})$  with the vertices $(i\mbox{ }n)$ and $(j\mbox{ }n)$ of $K_{n-1}$. Then we can compute recursively the edge ideal 
$I(T_n)$: if the edge ideal $I(T_{n-1})$ has been computed, we only need to add the monomials 
corresponding to $(i\mbox{ }j)\sim (i\mbox{ }n)$, $(i\mbox{ }j)\sim(j\mbox{ }n)$, and all the monomials 
coming from $K_{n-1}$.
\end{rem}

Using the previous procedure we computed $I(T_7)\subset R_1=\Q[z_1,z_2,\ldots,z_{21}]$ and 
$I(T_9)\subset R_2=\Q[z_1,z_2,\ldots,z_{36}]$. Using the library $\mathtt{primdec.lib}$ \cite{DLPS}, we compute primary 
decomposition of ideals and we found that the sequence
\begin{align*}
\Big\{\sum_{i=1}^{21}z_i,\sum_{i=1}^{21}z_i^2,\sum_{i=1}^{21}z_i^3 \Big\},
\end{align*}
is a regular sequence of $R_1/I(T_7)$. Similarly, the sequence
\begin{align*}
\Big\{\sum_{i=1}^{36}z_i,\sum_{i=1}^{36}z_i^2,\sum_{i=1}^{36}z_i^3,\sum_{i=1}^{36}z_i^4 \Big\},
\end{align*}
 is a regular sequence of $R_2/I(T_9)$ (see the appendix for a discussion on homogeneous system of parameters
for edge ideals using symmetric polynomials). Since the independence number of $T_7$ and $T_9$ are $3$ and $4$,
respectively, we conclude that
\begin{pro}\label{7,9 are CM}
$T_7$ and $T_9$ are CM graphs over any field of characteristic zero.
\end{pro}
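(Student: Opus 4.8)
The plan is to certify the Cohen--Macaulay property by producing, in each case, an explicit homogeneous regular sequence in the edge subring $R/I(T_n)$ whose length matches the Krull dimension. Since $\de(R/I)\leq\dim(R/I)$ always holds, a homogeneous regular sequence of length $\dim(R/I)$ forces $\de(R/I)=\dim(R/I)$, which is exactly the definition of CM. Here the Krull dimension equals the independence number of $T_n$, namely $\lfloor n/2\rfloor$ (Proposition~\ref{Johnson unmixed} shows every maximal independent set has this size), so $\dim R_1/I(T_7)=3$ and $\dim R_2/I(T_9)=4$. Thus it suffices to exhibit regular sequences of length $3$ and $4$, respectively.

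As candidates I would take the power-sum symmetric polynomials $p_k=\sum_i z_i^{\,k}$ for $k=1,\ldots,\lfloor n/2\rfloor$, which are precisely the sequences displayed above. To verify regularity over $\Q$ I would check, one step at a time, that $p_k$ is a non-zerodivisor modulo $\langle I,p_1,\ldots,p_{k-1}\rangle$. The first step is transparent: an edge ideal is a squarefree monomial ideal, hence radical, so its associated primes are exactly the minimal primes $\langle z_i : i\in C\rangle$ attached to the minimal vertex covers $C$ of $T_n$. Each such $C$ omits at least one vertex---its complement is a nonempty independent set---so $p_1$ lies in none of these primes and is therefore a non-zerodivisor on $R/I$.

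The remaining steps are where the real work lies. After adjoining $p_1$ the ideal $\langle I,p_1\rangle$ is no longer monomial, so I would compute the primary decomposition of each $\langle I,p_1,\ldots,p_{k-1}\rangle$ with a computer algebra system (SINGULAR together with \texttt{primdec.lib}) and confirm that $p_k$ avoids every associated prime. To upgrade the conclusion from $\Q$ to an arbitrary field $\K$ of characteristic zero I would invoke flat base change: regularity of a homogeneous sequence is equivalent to exactness of its Koszul complex, a property preserved under the flat extension $\Q\hookrightarrow\K$, since $R_\K/I=(R_\Q/I)\otimes_\Q\K$. This is precisely where the characteristic-zero hypothesis enters---Newton's identities relate the $p_k$ to the elementary symmetric polynomials through divisions by $1,2,\ldots,\lfloor n/2\rfloor$, so the power sums are guaranteed to form a system of parameters exactly when these integers are invertible, and in particular over $\Q$.

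I expect the main obstacle to be the primary-decomposition computation in the later steps: once $\langle I,p_1\rangle$ stops being monomial, the combinatorial description of associated primes is lost, and with $21$ variables for $T_7$ and $36$ for $T_9$ the decomposition is far beyond hand calculation, which is exactly why a computer algebra system is indispensable. A purely numerical alternative---verifying that the $p_k$ form a homogeneous system of parameters and that $\dim_{\K}\big(R/\langle I,p_1,\ldots,p_{\lfloor n/2\rfloor}\rangle\big)$ equals the multiplicity of $R/I$ times $\lfloor n/2\rfloor!$, an equality that for a system of parameters holds if and only if the sequence is regular---would sidestep primary decomposition but confronts the same size-of-computation wall.
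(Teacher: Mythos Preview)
Your proposal is correct and mirrors the paper's approach exactly: exhibit the power-sum symmetric polynomials $p_k=\sum_i z_i^{\,k}$ for $k=1,\ldots,\lfloor n/2\rfloor$ as a regular sequence, verified via primary decomposition in $\mathtt{SINGULAR}$ with $\mathtt{primdec.lib}$. You actually supply more justification than the paper does---the non-zerodivisor argument for $p_1$ and the flat base-change passage from $\Q$ to arbitrary characteristic zero---whereas the paper simply reports the computation and states the conclusion.
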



\section{Non-Cohen-Macaulayness of $T_4$, $T_6$, $T_8$ and $T_n$ for $n\geq10$}\label{n even n odd}

In this section we show that $T_n$ is not CM for  $n$ even, $n \geq 4$. We also show
that $T_n$ is not CM for $n$ odd, $n \geq 11$.

\begin{pro}\label{even not CM}
The triangular graph $T_n$ is not CM if $n$ is even, except for $n=2$.
\end{pro}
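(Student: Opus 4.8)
The plan is to reduce the entire even family to the single base case $n=4$ and then analyze the resulting complex directly. By Corollary~\ref{lema walter}, once $T_4$ is shown to be non-CM, $T_n$ is automatically non-CM for every even $n\geq 4$; since $4$ is the least even integer exceeding $2$, this covers exactly the claimed range. Thus the whole statement collapses to proving that $T_4$ is not CM.

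First I would describe $\D(4)$ explicitly. Its vertices are the six $2$-subsets of $\{1,2,3,4\}$, and because the independence number of $T_4$ equals $\lfloor 4/2\rfloor=2$, the complex is $1$-dimensional. Its facets are the pairs of disjoint $2$-subsets, namely $\{(12),(34)\}$, $\{(13),(24)\}$ and $\{(14),(23)\}$. These three edges use each of the six vertices exactly once, so $\D(4)$ is a disjoint union of three segments and is therefore disconnected.

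Next I would invoke Corollary~\ref{Reisner criterion dim 1}: a $1$-dimensional simplicial complex is CM if and only if it is connected. Since $\D(4)$ has three connected components, it is not CM over any field, and hence $T_4$ is not CM. As an alternative, one could compute $f(\D(4))=(1,6,3)$, from which $h(\D(4))=(1,4,-2)$; the negative entry $h_2=-2$ contradicts Theorem~\ref{h vector}, yielding the same field-independent conclusion.

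There is essentially no hard step once the reduction is in place: the only verification is the elementary structure of $\D(4)$, after which the connectivity criterion finishes the argument immediately. The main conceptual point is recognizing that Corollary~\ref{lema walter} lets a single base case settle the whole even family, so that no new work is required for larger even $n$.
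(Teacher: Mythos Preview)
Your proof is correct and follows essentially the same route as the paper: reduce via Corollary~\ref{lema walter} to the base case $n=4$, observe that $\Delta(4)$ is a $1$-dimensional disconnected complex (three disjoint edges, i.e.\ $\overline{T_4}$), and conclude by Corollary~\ref{Reisner criterion dim 1}. Your explicit listing of the facets and the optional $h$-vector check add detail the paper omits, but the argument is the same.
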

\begin{proof}
If $n=2$, $T_n$ is a single vertex and so it is CM. Let $n=4$.
The simplicial complex $\D(4)$ is 1-dimensional and non-connected, actually $\D(4)=\overline{T_4}$ (see Figure~\ref{c-johnson42}). By Corollary 
\ref{Reisner criterion dim 1}, $\D(4)$ is not CM. Now, Corollary \ref{lema walter} implies that $T_n$ 
is not CM for every $n\geq4$ with $n$ even.
\end{proof}

Now we turn our attention to $T_n$ for $n\geq11$, $n$ odd.

\begin{lem}\cite[Theorem 6.9.1]{sfa} \label{formula}
The number of faces of dimension $i$ of $\Dn$ is given by the following formula:
\[
f_i=\frac{1}{2^{i+1}}\cdot\frac{n!}{{(i+1)!}(n-2(i+1))!}.
\]
\end{lem}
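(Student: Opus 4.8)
The plan is to reinterpret the faces of $\Dn$ combinatorially and then count them directly. Recall that a face of $\Dn$ is an independent set of $T_n$, i.e.\ a collection of $2$-subsets of $\{1,\dots,n\}$ that are pairwise non-adjacent in $T_n$. Since two vertices of $T_n$ are adjacent precisely when the corresponding $2$-subsets intersect, a face of $\Dn$ is nothing but a collection of pairwise \emph{disjoint} $2$-subsets of $\{1,\dots,n\}$—that is, a matching in the complete graph $K_n$. A face of dimension $i$ consists of $i+1$ such $2$-subsets, so $f_i$ equals the number of matchings of size $m:=i+1$ in $K_n$.

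First I would count these matchings in two stages. A matching of size $m$ uses exactly $2m$ distinct elements of $\{1,\dots,n\}$, so I would choose this underlying set of $2m$ elements in $\binom{n}{2m}$ ways. Then I would count the number of ways to partition a fixed set of $2m$ elements into $m$ unordered pairs, i.e.\ the number of perfect matchings on $2m$ points, which equals the double factorial $(2m-1)!!=\frac{(2m)!}{2^m\,m!}$.

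Multiplying the two counts and simplifying gives
\[
f_i=\binom{n}{2m}\cdot\frac{(2m)!}{2^m\,m!}=\frac{n!}{(2m)!\,(n-2m)!}\cdot\frac{(2m)!}{2^m\,m!}=\frac{n!}{2^m\,m!\,(n-2m)!},
\]
and substituting $m=i+1$ yields exactly the stated formula. For completeness I would note the boundary behaviour: when $2(i+1)>n$ there are no matchings of that size, consistent with the convention that the factor $(n-2(i+1))!$ forces the expression to vanish once $n-2(i+1)<0$.

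There is no genuine obstacle here; the only point requiring care is the standard count of perfect matchings on $2m$ points. This can be verified by a short induction—pair the first element with any of the remaining $2m-1$ elements and match the rest—or by the ordered-pairing-divided-by-symmetry argument giving $(2m)!/(2^m\,m!)$. Everything else is a routine simplification of binomial coefficients.
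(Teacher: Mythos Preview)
Your argument is correct: faces of $\Dn$ of dimension $i$ are exactly matchings of size $i+1$ in $K_n$, and your two-stage count (choose the $2m$ points, then perfectly match them) yields the stated formula after routine simplification. Note that the paper does not actually prove this lemma but simply cites it from \cite{sfa}; your proposal supplies a clean self-contained proof where the paper relies on an external reference.
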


\begin{pro}\label{non CM n>11}
$T_n$ is not CM for every $n\geq11$,  $n$ odd.
\end{pro}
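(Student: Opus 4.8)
The plan is to reduce the whole statement to the single case $n=11$ and then certify the failure of the Cohen-Macaulay property by exhibiting a negative entry in the $h$-vector. By Corollary~\ref{lema walter}, as soon as we know that $T_{11}$ is not CM, it follows automatically that $T_n$ is not CM for every odd $n\geq 11$. Thus it suffices to prove that the simplicial complex $\D(11)$ of independent sets of $T_{11}$ is not Cohen-Macaulay.

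To detect this, I would apply the contrapositive of Theorem~\ref{h vector}: if even one entry $h_i(\D(11))$ is negative, then $\D(11)$ cannot be CM. Since the independence number of $T_{11}$ is $\lfloor 11/2\rfloor=5$, the complex $\D(11)$ has dimension $d=4$, so its $f$-vector is $(f_{-1},f_0,\dots,f_4)$. Using Lemma~\ref{formula} I would compute these six numbers explicitly, obtaining the $f$-vector $(1,55,990,6930,17325,10395)$, and then substitute them into the definition of the $h$-vector.

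The only entry I would actually need is the top one, $h_{d+1}=h_5$. In the defining sum the binomial $\binom{d+1-i}{k-i}$ reduces to $\binom{d+1-i}{d+1-i}=1$ when $k=d+1$, so that
\[
h_5=\sum_{i=0}^{5}(-1)^{5-i}f_{i-1}
\]
is simply an alternating sum of the $f$-numbers (up to sign, the reduced Euler characteristic of $\D(11)$). Evaluating it yields $h_5=-936<0$. By Theorem~\ref{h vector} this forces $\D(11)$ to be non-CM, hence $T_{11}$ is not CM, and Corollary~\ref{lema walter} then gives the full statement.

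I do not expect a serious obstacle once the reduction to $n=11$ is set up: from there everything is a finite, explicit computation. The points needing care are the arithmetic of the $f$-vector coming from Lemma~\ref{formula} and verifying that the alternating sum is genuinely negative; choosing the top $h$-entry is what keeps this clean, as it sidesteps the binomial bookkeeping required for the intermediate entries. It is worth flagging that $h_1,\dots,h_4$ all turn out to be positive, so the negativity of the last entry is essential and could not be replaced by an easier earlier one.
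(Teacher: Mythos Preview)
Your proposal is correct and follows essentially the same argument as the paper: compute the $f$-vector of $\D(11)$ via Lemma~\ref{formula}, observe that $h_5=-936<0$, invoke Theorem~\ref{h vector} to rule out Cohen--Macaulayness, and then propagate to all odd $n\geq 11$ through Corollary~\ref{lema walter}. The only cosmetic difference is that the paper records the full $h$-vector $(1,50,780,4280,6220,-936)$, whereas you isolate $h_5$ directly as the alternating sum; both reach the same conclusion.
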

\begin{proof}
Using the formula of lemma \ref{formula}, we find that 
\begin{align}
f(\D(11))&=(1,55,990,6930,17325,10395)\notag\\
h(\D(11))&=(1,50,780,4280,6220,-936)\notag
\end{align}
Since there is a negative entry in $h(\D(11))$, Theorem \ref{h vector} implies that $T_{11}$ is not CM. Corollary \ref{lema walter} implies that $T_n$ is not CM for every odd $n$, $n\geq11$.
\end{proof}

Putting together the results of the previous sections we obtain the following classification of $T_n$ in terms
of the CM property:

\begin{teo}\label{main}
For triangular graphs $T_n$, the following holds:
\begin{itemize}
\item[(i)] $T_2$, $T_3$ and $T_5$ are CM graphs. 
\item[(ii)] $T_7$ and $T_9$ are CM graphs over any field of characteristic zero.
\item[(iii)] $T_4$, $T_6$, $T_8$ and $T_n$, for $n\geq 10$, are not CM graphs. 
\end{itemize}
\end{teo}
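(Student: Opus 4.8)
The plan is to observe that Theorem~\ref{main} is a compilation of the results established in the preceding sections, so the proof amounts to collecting those statements and checking that their index ranges partition the three items with no gaps and no overlaps. I would organize the argument exactly according to the three cases (i), (ii), (iii).

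For part (i), I would note that $T_2$ consists of a single vertex, so $R/I(T_2)=R$ is trivially Cohen-Macaulay; this is already recorded at the start of the proof of Proposition~\ref{even not CM}. The Cohen-Macaulayness of $T_3$ and $T_5$ is precisely the content of Proposition~\ref{3,5 are CM}. For part (ii), I would simply cite Proposition~\ref{7,9 are CM}, which establishes that $T_7$ and $T_9$ are CM over any field of characteristic zero.

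The only point requiring a little care is part (iii), where I must verify that the two non-CM results together cover every graph in the list. Proposition~\ref{even not CM} gives that $T_n$ is not CM for every even $n\geq 4$, which accounts for $T_4$, $T_6$, $T_8$, and all even $n\geq 10$. Proposition~\ref{non CM n>11} gives that $T_n$ is not CM for every odd $n\geq 11$, which accounts for all odd $n\geq 10$, namely $n=11,13,\dots$. Combining the two, every $T_n$ with $n\geq 10$ is covered regardless of parity, and the three small even cases $T_4$, $T_6$, $T_8$ are covered as well, which is exactly what (iii) asserts.

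I do not expect a genuine obstacle, since all the substantive work has already been carried out: the connectivity argument for $\D(5)$, the explicit regular sequences for $T_7$ and $T_9$, the disconnectedness of $\D(4)$, and the negative entry of $h(\D(11))$ together with its propagation via Corollary~\ref{lema walter}. The one thing worth double-checking is the boundary between the CM and non-CM regimes. On the odd side, the last CM graph is $T_9$ (part (ii)) and the first non-CM graph is $T_{11}$, so there is no conflict with the threshold $n\geq 11$ in Proposition~\ref{non CM n>11}; on the even side, $T_2$ is the unique CM graph, consistent with Proposition~\ref{even not CM} beginning at $n=4$. Once these thresholds are checked to align, the three items follow immediately from the cited propositions.
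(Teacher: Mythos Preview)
Your proposal is correct and mirrors the paper's own proof, which simply states that the theorem follows from Propositions~\ref{3,5 are CM}, \ref{7,9 are CM}, \ref{even not CM}, and \ref{non CM n>11}. Your additional verification that the index ranges align without gaps or overlaps is a welcome bit of care, but the approach is identical.
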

\begin{proof}
The theorem follows from propositions \ref{3,5 are CM}, \ref{7,9 are CM}, \ref{even not CM}, and \ref{non CM n>11}. 
\end{proof}

\begin{rem}
Using $\mathtt{SINGULAR}$ 
$\mathtt{4}$-$\mathtt{0}$-$\mathtt{2}$, we verified that $T_7$ is CM over some fields of positive characteristic, such as
$\F_2$, $\F_3$ and $\F_5$. In every case, we found explicit regular sequences using symmetric polynomials (see appendix). This fact suggests 
that $T_7$ and $T_9$ might be CM over any field, giving a complete classification of $T_n$ in terms of the CM property. 
\end{rem}


\appendix

\section{An explicit regular sequence for CM graphs}

It is well known that for any graded ideal $I\subset\Po$, there exists a homogeneous system of parameters (h.s.o.p. for short) 
for $\Po/I$ (see, for instance, \cite[Proposition 2.2.10]{Vi1}). In this appendix we revisit this result for edge ideals by showing 
the existence of an explicit h.s.o.p. of a particularly nice shape. 

This study was motivated by the following fact. When investigating about the Cohen-Macaulayness of $T_n$, experimental computation 
showed that for small odd values of $n$, the sequence 
\[
\Big\{\sum_{\{x_i\}\in A_1} x_i,\sum_{\{{x_{i_1},x_{i_2}\}}\in A_2} x_{i_1}x_{i_2},\dots,\sum_{\{x_{i_1},\dots,x_{i_d}\}\in A_d} x_{i_1}\cdots x_{i_d} \Big\}
\]
is a regular sequence of the edge subring of $T_n$, where $A_j$ is the set of independent sets of size $j$ in $T_n$  and $d$ is the Krull dimension of the edge subring. Inspired by this fact, we show that for every simple graph  there is a h.s.o.p. for its edge subring having this shape.

Let $G$ be a simple graph on the set of vertices $\{x_1,\dots,x_N\}$ and let $I(G)\subset\Po$ be its edge ideal. 
Let us recall the correspondence between minimal vertex covers of $G$, i.e., complements of maximal independent sets of $V(G)$, and minimal primes of $I(G)$.

\begin{pro}\cite[Proposition 6.1.16]{Vi1} \label{minimal primes}
If $\p$ is an ideal of $\Po$ generated by $C=\{x_{i_1},\ldots,x_{i_r}\}$, then $\p$ is a minimal prime of $I(G)$ 
if and only if $C$ is a minimal vertex cover of $G$.
\end{pro}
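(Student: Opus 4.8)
The plan is to identify the minimal primes of the squarefree monomial ideal $I(G)$ directly and then match them with the combinatorics of vertex covers. First I would record two elementary facts about the ideal $\p_C=\langle x_i\colon v_i\in C\rangle$ attached to a set of vertices $C\subseteq V(G)$. Such an ideal is prime, since $\Po/\p_C$ is a polynomial ring on the remaining variables and hence a domain. Moreover, because $\p_C$ is a monomial ideal generated by variables, a squarefree monomial $x_ix_j$ lies in $\p_C$ if and only if $v_i\in C$ or $v_j\in C$. Consequently $I(G)\subseteq\p_C$ if and only if every edge $\{v_i,v_j\}$ has an endpoint in $C$, i.e.\ if and only if $C$ is a vertex cover of $G$.

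The key step is to establish the decomposition
\[
I(G)=\bigcap_{C}\p_C,
\]
where $C$ ranges over the minimal vertex covers of $G$. The inclusion $\subseteq$ is immediate from the previous paragraph, since a minimal vertex cover is in particular a vertex cover. For the reverse inclusion it suffices to work with monomials, as both sides are monomial ideals. Given a monomial $m$, write $\mathrm{supp}(m)\subseteq V(G)$ for the set of vertices $v_i$ with $x_i\mid m$; then $m\in\p_C$ if and only if $\mathrm{supp}(m)\cap C\neq\emptyset$. Suppose $m\notin I(G)$, so that no edge has both endpoints in $\mathrm{supp}(m)$ and hence $\mathrm{supp}(m)$ is an independent set. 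Then its complement $V(G)\setminus\mathrm{supp}(m)$ is a vertex cover, which I would pare down to a minimal vertex cover $C_0$ contained in it; this $C_0$ is disjoint from $\mathrm{supp}(m)$, whence $m\notin\p_{C_0}$ and therefore $m\notin\bigcap_C\p_C$. This proves $\supseteq$.

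Finally I would deduce the statement from this decomposition by standard commutative algebra. The ideals $\p_C$ appearing are pairwise incomparable, because distinct minimal vertex covers are incomparable under inclusion and $\p_C\subseteq\p_{C'}$ forces $C\subseteq C'$. Any prime $P\supseteq I(G)=\bigcap_C\p_C$ contains the product $\prod_C\p_C$ and hence contains some $\p_C$; thus every prime lying over $I(G)$ sits above one of the $\p_C$, and since the $\p_C$ are incomparable primes, each of them is in fact a minimal prime of $I(G)$. Consequently the minimal primes of $I(G)$ are exactly the ideals $\p_C$ with $C$ a minimal vertex cover. Reading this off for an ideal $\p$ generated by a set of variables $C$ gives both directions of the asserted equivalence, since $\p=\p_C$ is a minimal prime precisely when $C$ is one of these minimal vertex covers.

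I expect the main obstacle to be the reverse inclusion in the displayed decomposition — concretely, the observation that the complement of an independent set is a vertex cover and can be refined to a minimal one disjoint from $\mathrm{supp}(m)$. Everything else is formal: the primeness of $\p_C$, the monomial membership criterion, and the passage from an irredundant prime intersection to the list of minimal primes.
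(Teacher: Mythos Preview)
Your argument is correct and is essentially the standard proof of this fact. Note, however, that the paper does not prove this proposition at all: it is quoted verbatim from Villarreal's \textit{Monomial Algebras} and used as a black box. The proof you sketch---showing $I(G)=\bigcap_C\p_C$ over minimal vertex covers via the support/independence observation, and then reading off the minimal primes from this irredundant primary decomposition---is exactly the classical route taken in that reference, so there is nothing to contrast.
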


\begin{exam}\label{complete graph is CM}
Let $K_N$ be the complete graph on the vertices $\{x_1,\ldots,x_N\}$. Every minimal vertex cover of $K_N$ has the form $K_N\setminus\{x_i\}$, for some $i$. 
By the previous correspondence, every minimal prime of $I(K_N)$ is generated by all variables except $x_i$. It follows that
$\sum_{i=1}^{N}x_i$ is not a zero divisor in $R/I(K_N)$, that is, $1\leq\de(R/I(K_N))\leq\dim(R/I(K_N))=1$. Thus, $K_N$ is CM.
\end{exam}

\begin{exam}
The edge ideal of $T_4$  (Figure~\ref{johnson42}) is given by
\[
I(T_4)=\langle x_{12}x_{13},x_{12}x_{14},x_{12}x_{23},x_{12}x_{24},x_{13}x_{34},x_{14}x_{34},x_{23}x_{34},x_{24}x_{34}\rangle.
\]
Let $R=\K[x_{12},x_{13},x_{14},x_{23},x_{24},x_{34}]$. Let $\sigma_1,\dots,\sigma_6\in R$ be the elementary symmetric polynomials.
Since the independence number of $G$ is $2$, we have that $[\sigma_i]=[0]$ in $R/I(T_4)$, for $i \in \{3, 4, 5, 6\}$. In addition, 
\begin{align*}
[\sigma_1]&=[x_{12}+x_{13}+x_{14}+x_{23}+x_{24}+x_{34}],\\
[\sigma_2]&=[x_{12}x_{34}+x_{13}x_{24}+x_{14}x_{23}].
\end{align*} 
Using lemma \ref{radical symmetric} below, we conclude that
\[
\sqrt{\langle [\sigma_1],[\sigma_2] \rangle}=\sqrt{\langle [\sigma_1], \dots,[\sigma_6] \rangle}=\langle [x_{12}],[x_{13}],[x_{14}],[x_{23}],[x_{24}],[x_{34}] \rangle.
\]
Since $\dim R/I(T_4)=2$, we conclude that $\{[\sigma_1],[\sigma_2]\}$ is a h.s.o.p. for $R/I(T_4)$.
\end{exam}

\begin{lem}\label{radical symmetric}
Let $\sigma_1,\ldots,\sigma_m\in\K[z_1,\ldots,z_m]$ be the elementary symmetric polynomials. Then 
$\sqrt{\langle \sigma_1,\dots,\sigma_m \rangle}=\langle z_1,\dots,z_m\rangle.$
\end{lem}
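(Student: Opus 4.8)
The plan is to prove the two inclusions separately, with the reverse inclusion carrying the actual content.

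For the inclusion $\sqrt{\langle \sigma_1,\dots,\sigma_m \rangle}\subseteq\langle z_1,\dots,z_m\rangle$, I would first observe that each $\sigma_k$ is a sum of monomials of positive degree and hence has no constant term, so $\sigma_k\in\langle z_1,\dots,z_m\rangle$ for every $k$; this already gives $\langle \sigma_1,\dots,\sigma_m \rangle\subseteq\langle z_1,\dots,z_m\rangle$. Since $\K[z_1,\dots,z_m]/\langle z_1,\dots,z_m\rangle\cong\K$ is a field, the ideal $\langle z_1,\dots,z_m\rangle$ is prime, in particular radical. Passing to radicals preserves inclusions, so $\sqrt{\langle \sigma_1,\dots,\sigma_m \rangle}\subseteq\sqrt{\langle z_1,\dots,z_m\rangle}=\langle z_1,\dots,z_m\rangle$.

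For the reverse inclusion $\langle z_1,\dots,z_m\rangle\subseteq\sqrt{\langle \sigma_1,\dots,\sigma_m \rangle}$, it suffices to exhibit a power of each $z_j$ lying in $\langle \sigma_1,\dots,\sigma_m \rangle$. Here I would use the defining identity expressing the elementary symmetric polynomials as the coefficients of a univariate polynomial: in $\K[z_1,\dots,z_m][t]$ one has
\[
\prod_{i=1}^m (t-z_i)=t^m-\sigma_1 t^{m-1}+\sigma_2 t^{m-2}-\cdots+(-1)^m\sigma_m.
\]
Evaluating at $t=z_j$ annihilates the left-hand side, which yields
\[
z_j^m=\sigma_1 z_j^{m-1}-\sigma_2 z_j^{m-2}+\cdots-(-1)^m\sigma_m,
\]
whose right-hand side is manifestly an element of $\langle \sigma_1,\dots,\sigma_m \rangle$. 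Hence $z_j^m\in\langle \sigma_1,\dots,\sigma_m \rangle$, and therefore $z_j\in\sqrt{\langle \sigma_1,\dots,\sigma_m \rangle}$ for each $j$, which finishes the argument.

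I do not expect a genuine obstacle: the whole proof hinges on recognizing the $\sigma_k$ as the coefficients of $\prod_i(t-z_i)$ and then evaluating at the roots $z_j$, which converts the tautology ``$z_j$ is a root of its own characteristic polynomial'' into an explicit membership statement. It is worth emphasizing that this route is purely algebraic and imposes no hypothesis on $\K$, not even algebraic closure; a geometric alternative---noting that the common zero locus of $\sigma_1,\dots,\sigma_m$ is only the origin and invoking the Nullstellensatz---also works but would require $\K$ algebraically closed and is thus less clean. One could instead derive $z_j^m\in\langle \sigma_1,\dots,\sigma_m \rangle$ through Newton's identities, but the characteristic-polynomial evaluation is the most direct path.
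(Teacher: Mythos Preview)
Your proof is correct and follows essentially the same route as the paper: the key step in both is the identity $z_j^{m}=\sigma_1 z_j^{m-1}-\sigma_2 z_j^{m-2}+\cdots+(-1)^{m+1}\sigma_m$, which the paper simply writes down as a ``telescopic sum'' while you obtain it more transparently by evaluating $\prod_i(t-z_i)$ at $t=z_j$. Your write-up also makes the easy inclusion explicit, which the paper leaves implicit.
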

\begin{proof}
It is enough to consider the following telescopic sum:
\[
z_i^{m}=z_i^{m-1}\sigma_1-z_i^{m-2}\sigma_2+z_i^{m-3}\sigma_3-\dots+(-1)^{m+1}\sigma_m.
\]
\end{proof}

\begin{pro}\label{regular sequence}
Let $G$ be a simple graph on $N$ vertices, $S=\Po/I(G)$, and $d=\dim S$. Let $A_j$ denote the set of 
independent sets of size $j$ in $G$. Then the sequence 
\[
\Big\{\sum_{\{x_i\}\in A_1} x_i,\sum_{\{{x_{i_1},x_{i_2}\}}\in A_2} x_{i_1}x_{i_2},\ldots,\sum_{\{{x_{i_1},\ldots,x_{i_d}\}}\in A_d} x_{i_1}\cdots x_{i_d} \Big\}
\]
is a homogeneous system of parameters for $S$. In particular, if $G$ is CM then this sequence is a regular sequence for $S$.
\end{pro}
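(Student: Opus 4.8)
The plan is to show that the given sequence is a homogeneous system of parameters (h.s.o.p.) for $S=\Po/I(G)$, and then invoke the standard fact that over a Cohen-Macaulay ring any h.s.o.p. is automatically a regular sequence. The definition of a h.s.o.p. for a ring of Krull dimension $d$ requires $d$ homogeneous elements $\th_1,\dots,\th_d$ of positive degree such that $S/\langle\th_1,\dots,\th_d\rangle$ has Krull dimension $0$; equivalently, the $\th_j$ generate an ideal whose radical is the irrelevant maximal ideal modulo $I(G)$. Since our sequence has exactly $d=\dim S$ elements, with the $j$-th one homogeneous of degree $j$, the only thing to verify is this radical condition.

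First I would set up the verification via the minimal-prime description. By Proposition \ref{minimal primes}, the minimal primes of $I(G)$ correspond to minimal vertex covers of $G$, i.e., to complements of maximal independent sets. To prove $S/\langle\th_1,\dots,\th_d\rangle$ is zero-dimensional it suffices to show that every minimal prime $\p$ of $I(G)$, after adjoining the $\th_j$, becomes the irrelevant ideal; concretely, that for each maximal independent set $U$ of $G$ the images of $\th_1,\dots,\th_d$ together with the edge relations force every variable into the radical. The natural tool is Lemma \ref{radical symmetric}: if $U=\{x_{k_1},\dots,x_{k_t}\}$ is a maximal independent set, then modulo $I(G)$ all monomials supported on the complement of $U$ that appear in the $\th_j$ collapse, and the polynomial $\th_j$ reduces to the $j$-th elementary symmetric polynomial in the variables $x_{k_1},\dots,x_{k_t}$ (any monomial in $\th_j$ using a variable outside $U$ or using two non-independent variables lies in $I(G)$). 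Applying Lemma \ref{radical symmetric} in the variables indexed by $U$ then shows each $x_{k_i}$ lies in the radical of $\langle I(G),\th_1,\dots,\th_t\rangle$ localized at $\p$, and the remaining variables already lie in $\p$; hence the quotient is Artinian.

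The main obstacle, and the step I would treat most carefully, is the bookkeeping at an arbitrary maximal independent set $U$ when $G$ is not unmixed, so that $|U|=t$ may be strictly smaller than $d$. In that case one must check that $\th_1,\dots,\th_t$ already suffice to kill the variables in $U$ via Lemma \ref{radical symmetric} (using that $\sigma_1,\dots,\sigma_t$ generate a $\langle x_{k_1},\dots,x_{k_t}\rangle$-radical ideal in $t$ variables), and that the higher $\th_j$ for $j>t$ reduce to $0$ modulo $I(G)$ along $\p$ and therefore do no harm. Making precise the reduction $[\th_j]\equiv[\sigma_j(U)]$ modulo $\p$ — that is, identifying which monomials of $\th_j$ survive — is the heart of the argument; once that identification is in place, the radical computation is immediate from the lemma. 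Finally, having established that the radical of $\langle\th_1,\dots,\th_d\rangle$ in $S$ is the maximal homogeneous ideal, I would conclude $\dim S/\langle\th_1,\dots,\th_d\rangle=0$, so the sequence is a h.s.o.p.; and when $S$ is Cohen-Macaulay, a h.s.o.p. of the correct length $d=\dim S=\de S$ is a regular sequence, giving the final claim.
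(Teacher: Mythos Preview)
Your approach is correct but differs from the paper's. The paper gives a one–line global argument: in $S$ one has $[\sigma_k]=[F_k]$ for $1\le k\le d$ (the monomials of $\sigma_k$ not appearing in $F_k$ are exactly those containing an edge, hence vanish in $S$) and $[\sigma_k]=0$ for $k>d$ (no independent set has size $>d$), so $\langle[F_1],\dots,[F_d]\rangle=\langle[\sigma_1],\dots,[\sigma_N]\rangle$ in $S$, and a single application of Lemma~\ref{radical symmetric} in all $N$ variables gives the radical equal to the irrelevant ideal. Your route instead passes through each minimal prime $\p$ of $I(G)$ and applies the lemma separately in the $|U|$ variables indexed by the corresponding maximal independent set $U$. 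Both reach the same conclusion; the paper's identification $[\sigma_k]=[F_k]$ avoids the minimal-prime bookkeeping and the unmixed/non-unmixed case distinction altogether, while your approach makes the geometry at each component explicit. One small slip to fix: a monomial of $\th_j$ using a variable outside $U$ need not lie in $I(G)$ (and the clause about ``two non-independent variables'' is vacuous since $\th_j$ only sums over independent sets); what you need, and what is true, is that such a monomial lies in $\p$, which is exactly the reduction you perform.
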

\begin{proof}
Let $F_k=\sum_{\{{x_{i_1},\dots,x_{i_k}\}}\in A_k} x_{i_1}\cdots x_{i_k}$, for $1 \leq k\leq d$. Let $\sigma_1,\dots,\sigma_N\in\Po$ be the elementary 
symmetric polynomials. Since the independence number of $G$ is $d$, we have $[\sigma_k]=[0]$ in $S$ for every $k=d+1,\dots,N$. In addition, 
$[\sigma_k]=[F_k]$ for every $k \in \{1,\dots,d\}$. The proposition then follows from the previous lemma:
\[
\sqrt{\langle [F_1],\ldots,[F_d] \rangle}=\sqrt{\langle [\sigma_1],[\sigma_2],\ldots,[\sigma_N] \rangle}=\langle [x_1],\ldots,[x_N] \rangle.
\]
\end{proof}

\section*{Acknowledgements}

We would like to thank all participants of the seminar ``Gr\'aficas y anillos" held at Universidad Aut\'onoma de Zacatecas during the academic year 2014-2015. This work was inspired by this seminar. The third author would like to thank Jos\'e Luis Cisneros for stimulating discussions. The fourth author was partially supported by PIFI-UAZ, UAZ-CA-169.

\vspace{.5cm}
{\tiny \textsc {H. de Alba, Catedr\'atico CONACYT-UAZ.} Email: hdealbaca@conacyt.mx}\\
{\tiny \textsc {W. Carballosa, Catedr\'atico CONACYT-UAZ.} Email: wcarballosato@conacyt.mx}\\
{\tiny \textsc {D. Duarte, Catedr\'atico CONACYT-UAZ.} Email: adduarte@matematicas.reduaz.mx}\\
{\tiny \textsc {L. M. Rivera, Unidad Acad\'emica de Matem\'aticas, UAZ.} Email: luismanuel.rivera@gmail.com}\\
{\tiny \textsc {Universidad Aut\'onoma de Zacatecas (UAZ), Calzada Solidaridad y Paseo La Bufa, Col. Hidr\'aulica, C.P. 98060, Zacatecas, Zacatecas.}}

\end{document}